\newtheorem{theorem}{Theorem}
\newtheorem{definition}{Definition}
\newtheorem{lemma}[theorem]{Lemma}
\newtheorem{corollary}[theorem]{Corollary}
\newtheorem{observation}{Observation}
\newtheorem{proposition}{Proposition}
\newtheorem{problem}{Problem}
\author
{
Raphael Steiner 
}
\thanks{Department of Computer Science, Institute of Theoretical Computer Science, ETH Z\"{u}rich, Switzerland,  \texttt{raphaelmario.steiner@inf.ethz.ch}. The author was supported by an
ETH Zurich Postdoctoral Fellowship.}
\date{\today}
\title{A logarithmic bound for simultaneous embeddings of planar graphs}\thanks{A short version of this paper appears in the proceedings of the 31st International Symposium on Graph Drawing and Network Visualization. The present paper contains not only several added details, but also completely new material, including Sections~\ref{sect:smalln} and~\ref{sect:explicit}.}
\begin{document}
\maketitle

\begin{abstract}
A set $\mathcal{G}$ of planar graphs on the same number $n$ of vertices is called \emph{simultaneously embeddable} if there exists a set $P$ of $n$ points in the plane such that every graph $G \in \mathcal{G}$ admits a (crossing-free) straight-line embedding with vertices placed at points of $P$. A \emph{conflict collection} is a set of planar graphs of the same order with no simultaneous embedding. A well-known open problem from 2007 posed by Brass, Cenek, Duncan,
Efrat, Erten, Ismailescu, Kobourov, Lubiw and Mitchell, asks whether there exists a conflict collection of size $2$. While this remains widely open, we give a short proof that for sufficiently large $n$ there exists a conflict collection consisting of at most $(3+o(1))\log_2(n)$ planar graphs on $n$~vertices. This constitutes a double-exponential improvement over the previously best known bound of $O(n\cdot 4^{n/11})$ for the same problem by Goenka, Semnani and Yip.

Using our method we also provide a computer-free proof that there exists a conflict collection of size $30$, improving upon the previously smallest known conflict collection of size $49$ which was found using heavy computer assistance.

While the construction by Goenka et al.~was explicit, our construction of a conflict collection of size $O(\log n)$ is based on the probabilistic method and is thus only implicit. Motivated by this, for every large enough $n$ we give a different, fully explicit construction of a collection of less than $n^6$ planar $n$-vertex graphs with no simultaneous embedding.
\end{abstract}

\section{Introduction}

Given a planar graph $G$, a \emph{straight-line embedding} of $G$ in the plane is an injective mapping $\pi$ from the vertex-set $V$ of $G$ to $\mathbb{R}^2$ such that adding in all the straight-line segments with endpoints $\pi(u)$ and $\pi(v)$ for every edge $uv$ in $G$ yields a crossing-free drawing of $G$ in the plane. Given a planar graph $G$ and a point set $P\subseteq \mathbb{R}^2$ in the plane, we say that $G$ \emph{has a straight-line embedding on $P$} or equivalently that it \emph{embeds straight-line on $P$} if there exists a straight-line embedding of $G$ in the plane in which every vertex of $G$ is mapped to a distinct point of~$P$. 
If $G$ is a \emph{labelled planar graph} with vertices numbered as $\{v_1,\ldots,v_n\}$, and if $P$ is a \emph{labelled point set}, that is, its elements are numbered as $P=\{p_1,\ldots,p_n\}$, then we say that $G$ has a \emph{label-preserving straight-line embedding on $P$} if the bijection $\pi:V(G)\rightarrow P$, $\pi(v_i):=p_i$, forms a straight-line embedding of $G$. 

The study of straight-line embeddings of planar graphs is a classical area in graph drawing. For instance, one of the most fundamental results on this topic, the F\'{a}ry-Wagner-Theorem~\cite{fary}, states that every planar graph $G$ admits a straight-line embedding in the plane on \emph{some} point set. However, it is not true that a planar graph on $n$ vertices can be embedded on any given point set of size $n$. In fact, for a fixed planar graph $G$ on $n$ vertices, only a small fraction of all potential $n$-point sets may allow a straight-line embedding of $G$. Thus, many interesting questions in graph drawing arise from considering the embeddability of (restricted classes of) planar graphs on (restricted types of) point sets. One of the biggest branches of research in this direction concerns \emph{simultaneous embeddings} of sets of planar graphs, we refer to~\cite{blaesius} for a survey on this topic. Given a set $\mathcal{G}$ of planar graphs and a point set $P$ in the plane, we say that $\mathcal{G}$ is \emph{simultaneously embeddable on $P$} if every member $G \in \mathcal{G}$ admits a straight-line embedding on $P$. If $n \in \mathbb{N}$ and $\mathcal{G}$ is a set of planar graphs, each on $n$ vertices, we say that $\mathcal{G}$ is \emph{simultaneously embeddable} (without mapping) if there exists a point set $P \subseteq \mathbb{R}^2$ of size $n$ such that $\mathcal{G}$ is simultaneously embeddable on $P$.

There are two major open problems in geometric graph theory related to the notions introduced above. The first, called the \emph{universal set problem}, asks to find the asymptotics of the function $f(n)$, defined as the smallest size of a point set $P$ in the plane such that the set of all $n$-vertex planar graphs is simultaneously embeddable on $P$ (such a set is called \emph{$n$-universal}). Currently there is still a large gap in our understanding of this problem, with the best asymptotic estimates being $f(n) \le \frac{1}{4}n^2+O(n)$ by Bannister et al.~\cite{bannister} and $f(n) \ge (1.293-o(1))n$ by Scheucher et al.~\cite{scheuch}.
Brass et al.~\cite{brass} initiated a systematic study of the simultaneous embeddability of small sets $\mathcal{G}$ of planar graphs. In particular, they raised the following intriguing open problem, which remains unsolved. 
\begin{problem}[cf.~\cite{brass}]\label{problem:hard}
Is there a set $\mathcal{G}=\{G_1,G_2\}$ consisting of two planar graphs of the same order such that $\mathcal{G}$ is not simultaneously embeddable?
\end{problem}
Following the terminology of~\cite{cardinal,expo,scheuch}, let us call a set $\mathcal{G}$ of planar graphs, all of the same order $n \in \mathbb{N}$, a \emph{conflict collection} if $\mathcal{G}$ is not simultaneously embeddable. Addressing small values of $n$, Cardinal et al.~\cite{cardinal} proved that for $n \le 10$, there exists no conflict collection consisting of $n$-vertex planar graphs. In contrast, they showed that for every $n \ge 15$ a conflict collection \emph{does} exist. 
Motivated by Problem~\ref{problem:hard}, it is natural to study the value~$\sigma(n)$, defined as the smallest size of a conflict collection of $n$-vertex planar graphs (if such a collection exists). By the F\'{a}ry-Wagner-Theorem, we have $\sigma(n) \ge 2$ for every $n$, and Problem~\ref{problem:hard} is equivalent to the question whether there exists some $n$ such that $\sigma(n)=2$. Approaching this question, Cardinal et al.~\cite{cardinal} constructed a relatively small conflict collection on $35$-vertex graphs, proving that $\sigma(35) \le 7393$. A significantly smaller conflict collection consisting of $11$-vertex graphs was found by Scheucher et al.~\cite{scheuch}, showing that $\sigma(11) \le 49$. Regarding the general asymptotic bounds on the function $\sigma(n)$, it was recently proved by Goenka et al.~\cite{expo} that $\sigma(n) \le O(n\cdot 4^{n/11})<O(1.135^n)$, by an explicit general construction of a conflict collection on $n$-vertex planar graphs. While this bound is exponential in $n$, in this paper we give a short probabilistic proof that $\sigma(n)=O(\log n)$, and thus for large enough $n$ much smaller conflict collections of $n$-vertex graphs of only logarithmic size in~$n$ exist.

\begin{theorem}\label{thm:main}
It holds that $\sigma(n)\le (3+o(1))\log_2(n)$.
\end{theorem}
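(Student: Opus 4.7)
The plan is to apply the probabilistic method. For $n$ sufficiently large, I would sample $k := (3+o(1))\log_2 n$ random $n$-vertex planar graphs $G_1, \ldots, G_k$ independently from a suitable distribution --- most naturally, uniform over a canonical family such as labeled plane triangulations on $[n]$ --- and show that with positive probability, the collection $\{G_1, \ldots, G_k\}$ is a conflict collection.

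The main quantitative inputs would be: (i) the Goodman--Pollack bound that the number of order types of $n$ points in general position in $\mathbb{R}^2$ is at most $n^{4n+o(n)}$; (ii) the Sharir--Sheffer bound that every such $n$-point set admits at most $30^n$ triangulations (hence at most $\exp(O(n))$ crossing-free straight-line graphs, since every such graph is a subgraph of some triangulation); and (iii) a lower bound on the size of the chosen distribution, for example the Tutte / Gim\'enez--Noy asymptotic $\Theta(n!\cdot (256/27)^n / n^{7/2})$ for labeled plane triangulations.

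For each order type $[P]$, the plan is to upper bound the probability $q_P$ that a single random $G$ admits a straight-line embedding on $P$ via a count of pairs (bijection $\pi:[n]\to P$, triangulation of $P$ compatible with $\pi(G)$) divided by the size of the distribution. A union bound over the $|\mathcal{P}| \leq n^{4n+o(n)}$ order types together with the independence of the samples then yields
\[
\Pr[\exists P: \text{all } k \text{ graphs embed on } P] \;\leq\; |\mathcal{P}| \cdot q_P^{\,k},
\]
which must be driven strictly below $1$. For $k = (3+o(1))\log_2 n$, matching the exponents forces the target bound $q_P \leq 2^{-(4/3+o(1))n}$ for every order type $[P]$, and the constant $3$ comes from the ratio $4/(4/3)$ balancing Goodman--Pollack against the per-order-type decay rate.

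The main obstacle is establishing this exponential bound on $q_P$ with the correct constant. A direct application of Sharir--Sheffer gives a count of at most $n! \cdot 30^n$ embeddable labeled triangulations on $P$ (allowing any of the $n!$ bijections $[n]\to P$), but dividing this by the asymptotic size of the distribution only cancels the $n!$-factors, leaving a residual of order $n^{O(1)}\cdot (30\cdot 27/256)^n$, which is far too weak. Sharpening this to $2^{-(4/3+o(1))n}$ requires careful combinatorial accounting --- likely exploiting symmetries of the random triangulation, the rigidity of its combinatorial embedding, or a more refined bijection count --- and is where I expect the bulk of the technical work to lie. Once such a bound is in hand, the probabilistic argument closes routinely and yields the theorem.
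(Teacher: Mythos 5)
Your high-level architecture (probabilistic method, union bound over the $n^{(4+o(1))n}$ simple order types, the crux being a per--order-type bound on the embedding probability $q_P$) matches the paper, and you correctly identify that your argument does not close. But the gap is not merely technical: the target bound $q_P\le 2^{-(4/3+o(1))n}$ is almost certainly unattainable for your chosen distribution. If $\mathbf{G}$ is uniform over all labelled planar triangulations (of which there are $\Theta(n!\,(256/27)^n n^{-7/2})$), then for a point set $P$ with triangular convex hull admitting $c^n$ geometric triangulations (and such sets exist with $c$ close to $8$ or more), essentially every pair (geometric triangulation, bijection $[n]\to P$) yields an embeddable labelled triangulation; unless the map from pairs to labelled graphs collapses by an exponential factor, this forces $q_P\ge (27c/256)^{n-o(n)}$, which for $c\ge 4$ already exceeds $2^{-(4/3)n}$. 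No refinement of the Sharir--Sheffer count can rescue this, because the obstruction is a lower bound on $q_P$, not a weakness of the upper bound. The paper's fix is structural: it replaces your distribution by the uniform distribution on the family $\mathcal{T}_n$ of labelled \emph{stacked} triangulations (size $2^{n-4}(n-3)!$), for which a reconstruction lemma (Lemma~\ref{lemma:reconstruct}(ii)) guarantees that each bijection $\{v_1,\dots,v_n\}\to P$ is a straight-line embedding of \emph{at most one} member of the family. Hence at most $n!$ members embed on any $P$ and $q_P\le n!/(2^{n-4}(n-3)!)=16n(n-1)(n-2)\,2^{-n}$. This rigidity property is the key idea your proposal is missing.

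Note, moreover, that even this only gives $q_P\le 2^{-(1+o(1))n}$, which balanced against $n^{4n}$ would yield $k=(4+o(1))\log_2 n$, not $3$. The constant $3$ does not come from a stronger per--order-type decay, as you propose, but from a second device: the first of the $k$ sampled graphs is required to embed \emph{label-preservingly} on the chosen representative of the order type (Corollary~\ref{fewpointsets} arranges that this is without loss of generality). That event has probability at most $1/|\mathcal{T}_n|=2^{-(1+o(1))n\log_2 n}$, which cancels one factor of $n^{(1+o(1))n}$ from the order-type count; the remaining $k-1$ graphs each contribute $2^{-(1+o(1))n}$, and balancing $n^{(3+o(1))n}$ against $2^{(k-1)(1+o(1))n}$ gives $k=(3+o(1))\log_2 n$. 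So your proposal needs two repairs, not one: a rigid family with the uniqueness-of-reconstruction property in place of the uniform triangulation model, and the asymmetric treatment of the first graph to bring the constant from $4$ down to $3$.
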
 

Using the same technique, but with a more careful analysis, we then obtain the following upper bounds, which improve upon the benchmark of $49$ for the size of the previously smallest known conflict collection of planar graphs~\cite{scheuch}. In contrast to the heavily computer-assisted proof of the bound $49$ in~\cite{scheuch}, our proof in this paper is computer-free, elementary and self-contained.

\begin{theorem}\label{thm:main2}
For every $n \in \{107,108,\ldots,193\}$, we have $\sigma(n)\le 30$. In particular, there exists a conflict collection consisting of $30$ planar graphs.
\end{theorem}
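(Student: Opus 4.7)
I would follow the same probabilistic framework used to prove Theorem~\ref{thm:main}, but retain explicit quantitative estimates rather than absorbing them into $o(1)$ terms. Concretely, the plan is to sample $k = 30$ planar graphs independently at random from a carefully chosen distribution --- the most natural candidate being the uniform distribution on labeled planar triangulations on $n$ vertices --- and to establish that the probability that some $n$-point set admits straight-line embeddings of all $k$ sampled graphs is strictly less than~$1$. Once this is shown, the probabilistic method produces a conflict collection of size at most $30$, and the particular conclusion $\sigma(n)\le 30$ follows for every $n$ in the stated range.

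The core estimate is a union bound indexed by the order types of $n$-point sets in general position. For each order type $O$, the summand is the probability that $O$ admits straight-line embeddings of all $k$ independent samples, which factors as the $k$-th power of the single-graph embedding probability. This per-graph probability is controlled by three quantitative inputs: (i) an explicit upper bound on the number of order types, such as the Goodman--Pollack--Alon bound $|\mathcal{O}_n| \le n^{(4+o(1))n}$ with all constants written out; (ii) the Sharir--Sheffer bound $30^n$ on the number of triangulations of any $n$-point set in general position; and (iii) an explicit lower bound on the number of labeled planar triangulations on $n$ vertices, derived from Tutte's closed-form enumeration of rooted planar triangulations. Plugging these three ingredients into the union bound produces a single explicit inequality in $n$ and $k$; the remaining task is to verify, at $k = 30$, that this inequality holds for every integer $n$ in the window $\{107, \ldots, 193\}$.

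The main obstacle is quantitative care rather than a new idea. Because we are aiming for an exact constant rather than an asymptotic bound, every unit of slack in any of the three inputs directly shifts the valid range of $n$: a refinement of even a few percent in the exponential base of the order-type count, the triangulation count, or the triangulation enumeration can translate into a noticeable movement of the window. A secondary subtlety is choosing the random model so as to balance the tension between a tight lower bound on the total graph count and a clean upper bound on the embeddable count per fixed point set; tuning this choice is precisely what locates the particular window $\{107, \ldots, 193\}$ on which the bound $30$ is attained. The lower endpoint $n = 107$ corresponds to the smallest $n$ at which the union bound still closes with $k = 30$, while beyond $n = 193$ the method as stated no longer delivers $30$ as an effective bound without further optimization of the constants.
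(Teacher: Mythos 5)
Your high-level framework (sample $k$ random triangulations, union-bound over order types, invoke the probabilistic method) is the same as the paper's, but the three quantitative ingredients you name do not close the argument, and one of them fails outright. The fatal step is your choice of random model together with input (ii): if $\mathbf{G}$ is uniform over all labelled planar triangulations on $n$ vertices, the number of labelled triangulations embeddable on a fixed point set $P$ is bounded only by $n!$ times the number of geometric triangulations of $P$, and the Sharir--Sheffer bound $30^n$ on the latter vastly exceeds the growth rate $(256/27)^n\approx 9.48^n$ of the total number of abstract planar triangulations coming from Tutte's enumeration. The resulting per-graph embedding probability bound is therefore of the form $(30\cdot 27/256)^n\cdot\mathrm{poly}(n)>1$, i.e.\ trivial, and no value of $k$ makes the union bound close. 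The paper circumvents exactly this by sampling not from all triangulations but from the family $\mathcal{T}_n$ of labelled \emph{stacked} triangulations, whose key reconstruction property (Lemma~\ref{lemma:reconstruct}(ii)) is that for each fixed bijection of $\{v_1,\ldots,v_n\}$ onto $P$ at most \emph{one} member of $\mathcal{T}_n$ is embedded by it; this gives the exponentially small embedding probability $n!/|\mathcal{T}_n|=16n(n-1)(n-2)/2^n$ with no reference to counting triangulations of point sets at all.

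Two further discrepancies matter for hitting the constant $30$. First, the paper does \emph{not} take the $k$-th power of a single embedding probability: it requires the first sample to embed in a label-preserving way (probability $1/|\mathcal{T}_n|=1/(2^{n-4}(n-3)!)$) and only the remaining $k-1$ samples to embed in the unlabelled sense. This asymmetry effectively divides the labelled order-type count by $n!$, which is worth roughly $\log_2 n\approx 7$ in the final value of $k$; your symmetric version would land well above $30$. Second, the paper explicitly observes that Alon's written-out bound $t_s(n,2)\le\bigl(8(en/2)^2\ln(n/2)\bigr)^{2n+\ln(n/2)}$ is too weak for $n$ in the low hundreds, and replaces it by the bound $t_s(n,2)\le 2\cdot 16^n\sum_{k=0}^{2n}2^k\binom{\binom{n}{3}}{k}$ derived from Warren's theorem on sign patterns of polynomial systems; this refinement, plugged into the closed-form bound of Theorem~\ref{thm:aux} and evaluated numerically, is what produces the value $30$ precisely on the window $\{107,\ldots,193\}$. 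The window is not located by tuning the random model, as you suggest, but is simply where that explicit expression attains its minimum value $30$.
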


The proofs of Theorem~1 and~2 rely on the probabilistic method and thus unfortunately do not provide explicit constructions of the asserted conflict collections. Motivated by this, we also present a different, fully explicit construction of a conflict collection of less than $n^6$ planar $n$-vertex graphs for every large enough $n$. This still improves significantly over the explicit construction of size $(21n+552)4^{(n+37)/11}$ given by Goenka et al.~\cite{expo}, reducing the size of the constructed conflict collection from exponential to polynomial.

\begin{theorem}\label{thm:main3}
    For every $n \ge 7!=5040$, there exists an explicit construction of a conflict collection consisting of $n^{\underline{6}}+1=n(n-1)\cdots(n-5)+1$ planar $n$-vertex graphs. 
\end{theorem}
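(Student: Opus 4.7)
The plan is to construct, for each ordered $6$-tuple $(a_1,\ldots,a_6)$ of distinct elements of $\{1,\ldots,n\}$, a distinct planar graph $G_{(a_1,\ldots,a_6)}$ on $n$ vertices, plus one extra graph $G_0$, and to argue that no point set $P$ of size $n$ admits a simultaneous embedding of the whole family via a pigeonhole over ordered $6$-tuples of points of $P$, of which there are exactly $n^{\underline{6}}$.

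The construction will rest on two ingredients. First, a \emph{rigid planar skeleton}: a $3$-connected planar triangulation on $n$ vertices whose unique combinatorial embedding (by Whitney's theorem) pins down the geometric behavior of six structurally distinguished vertices $u_1,\ldots,u_6$. For instance, these may be the vertices of a unique hexagonal face, forced to occupy the six convex-hull vertices of $P$ in a prescribed cyclic order when the hexagonal face is chosen as the outer face of the embedding. An \emph{asymmetry marker} attached off the hexagonal face will break the remaining cyclic and reflective symmetries of the hexagon, pinning down a strict linear order on $u_1,\ldots,u_6$. Second, a collection of $n$ pairwise non-isomorphic \emph{labeling gadgets} of size $O(\log n)$ each (e.g., small rooted trees encoding labels in binary); the gadget attached at $u_j$ in $G_{(a_1,\ldots,a_6)}$ will encode the label $a_j$. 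The hypothesis $n \ge 7!$ provides ample room for the skeleton, the six gadgets, the marker, and any padding needed to reach exactly $n$ vertices while preserving $3$-connectedness.

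For the conflict step, suppose the family admits a simultaneous embedding on some $P$ of size $n$. For each graph $G_{(a_1,\ldots,a_6)}$, rigidity of the skeleton together with the asymmetry marker will force the images of $u_1,\ldots,u_6$ to be a specific ordered $6$-tuple of points of $P$, and the construction will guarantee that different label tuples $(a_1,\ldots,a_6)$ induce different such image tuples. This yields an injection from the family of $n^{\underline{6}}+1$ graphs into $P^{\underline{6}}$, whose size is only $n^{\underline{6}}$, the desired contradiction.

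The main obstacle will be the verification that this map from graphs to ordered $6$-tuples of $P$ is genuinely injective. The labeling gadgets must be not only structurally identifiable (so that $(a_1,\ldots,a_6)$ can be recovered combinatorially from $G_{(a_1,\ldots,a_6)}$), but must also interact with the skeleton's rigidity so that different label assignments lead to geometrically different placements of $u_1,\ldots,u_6$. Carrying this out while preserving planarity, $3$-connectedness, and the forced outer-face behavior under attachment of the gadgets is the principal technical content; together with a careful vertex count giving exactly $n$ vertices per graph, it constitutes the bulk of the explicit construction.
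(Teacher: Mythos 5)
Your overall frame --- distinguish six vertices $u_1,\ldots,u_6$ in every graph, observe that there are only $n^{\underline{6}}$ injective placements of them into $P$, and run a pigeonhole over the $n^{\underline{6}}+1$ graphs --- is exactly the paper's strategy (the paper uses the six vertices of an octahedron). But the decisive step is missing: you must say \emph{why} two distinct graphs in the family cannot both embed on $P$ with $u_1,\ldots,u_6$ landing on the same ordered $6$-tuple of points, and your construction provides no mechanism for this. Attaching at $u_j$ a small tree that ``encodes the label $a_j$ in binary'' creates no conflict: the points of $P$ carry no labels, so there is no correspondence between the abstract label $a_j\in\{1,\ldots,n\}$ and any particular point, and two non-isomorphic gadgets of size $O(\log n)$ hung at the same vertex can each be embedded using whichever nearby points are free. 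Your phrase ``the construction will guarantee that different label tuples induce different image tuples'' is precisely the claim that needs a proof, and nothing in the proposal supplies one. The paper resolves this with a counting conflict: each graph is the octahedron with an ordered partition $(n_1,\ldots,n_8)$ of $n-6$ placed into its $8$ faces and triangulated; since there are more than $n^{\underline{6}}$ such partitions, two graphs with distinct partitions must share the placement of the octahedron, and then Whitney's theorem forces some closed triangle of that placement to contain exactly $n_i$ points of $P$ in one embedding and exactly $n_i'\neq n_i$ in the other --- impossible, since both embeddings use all of $P$. It is the \emph{sizes} of the pieces distributed into rigidly determined regions, not any symbolic encoding of labels, that produces the contradiction.

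There are also concrete errors in the geometric setup. A planar triangulation has no hexagonal face, and, more importantly, an arbitrary $n$-point set $P$ need not have six convex-hull vertices, so the six distinguished vertices cannot be ``forced to occupy the six convex-hull vertices of $P$''; likewise nothing forces a prescribed face to be the outer face of a straight-line embedding on $P$. Whitney's theorem gives you only that the combinatorial embedding is unique up to reflection and the choice of outer face; the paper is careful to exploit exactly this (choosing a differing index $i$ whose face is \emph{not} the outer face before counting interior points), whereas your rigidity claims overreach. Finally, the role of the extra graph $G_0$ is never specified, and the injectivity-into-$P^{\underline{6}}$ framing would require $G_0$ to participate in the same map, which your construction does not arrange.
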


\paragraph{\textbf{Overview.}} The rest of this paper is structured as follows. In Section~\ref{sect:ordertypes}, we prepare the proofs of Theorems~\ref{thm:main} and~\ref{thm:main2} by introducing the necessary background and collecting some important facts regarding order types and straight-line embeddability. 
In Section~\ref{sect:mainproof} we then prove a general upper bound on $\sigma(n)$ in terms of the number $t_s(n,2)$ of simple order types of $n$ points (Theorem~\ref{thm:aux}). We then deduce Theorem~\ref{thm:main} by combining Theorem~\ref{thm:aux} with an asymptotic upper bound on the number of order types on $n$ points due to Alon~\cite{alon}. 
In Section~\ref{sect:smalln}, we use a theorem by Warren~\cite{warren} on the number of components of the set of common non-zeros of a system of multivariate polynomials to obtain upper bounds on $t_s(n,2)$ that are better than the bounds coming from Alon's paper for moderately small values of $n$. These improved estimates are significant when proving Theorem~\ref{thm:main2} for $n \in \{107,\ldots,193\}$. 
Finally, in Section~\ref{sect:explicit} we give the self-contained proof of Theorem~\ref{thm:main3}.

\section{Order types and straight-line embeddings}\label{sect:ordertypes}
Usually when working on a geometric problem relating to point sets that has a combinatorial flavour, small perturbations of the point set at hand do not change the behavior of the problem. In fact, often times one can reduce the infinite set of potential point sets $P$ in question to a finite set of ``types'', such that all point sets of the same type have identical behavior for the considered problem. This is also the case here, if one groups the point sets consisting of $n$ points in the plane according to their so-called \emph{order type}.

In the following definition, formally the \emph{orientation} of a triple $abc$ of three distinct points $a=(a_1,a_2)$, $b=(b_1,b_2)$, $c=(c_1,c_2)\in \mathbb{R}^2$ in the plane is defined as the sign of the determinant of the matrix 
$$\begin{pmatrix}
1 & 1 & 1\\
a_1 & b_1 & c_1\\
a_2 & b_2 & c_2
\end{pmatrix}.$$
It can be seen quite easily that the orientation of $abc$ is $1$ if the triangle $abc$ is oriented counterclockwise, it is $0$ if $a,b,c$ are collinear and it is $-1$ if the orientation of $abc$ is clockwise.

\begin{definition}[cf.~\cite{aichholzer,alon,cardinal,goaoc,scheuch}]
\begin{itemize}
\noindent
    \item Given two finite point sets $P$ and $Q$ in the plane, we say that $P$ and $Q$ are \emph{combinatorially equivalent} if there exists a bijection $f:P\rightarrow Q$ with the following property: 
For every ordered triple $abc$ consisting of three distinct points of $P$, the \emph{orientation} of the triple $abc$ is the same as the orientation of the triple $f(a)f(b)f(c)$. 
\item Given two \emph{labelled} point sets $P=\{p_1,\ldots,p_n\}$ and $Q=\{q_1,\ldots,q_n\}$ in the plane, we say that they are \emph{isomorphic} if for every choice of $1\le i,j,k \le n$ the orientations of the point triples $p_ip_jp_k$ and $q_iq_jq_k$ are identical.
\end{itemize}
\end{definition} 

It is readily verified that combinatorial equivalence and isomorphy form equivalence relations on the sets of unlabelled and labelled $n$-point sets in the plane, respectively. Accordingly, one can partition the set of $n$-point sets into equivalence classes w.r.t.~combinatorial equivalence, which are called \emph{order types}. Similarly, one can partition the set of labelled $n$-point sets into equivalence classes w.r.t.~isomorphy, and these are called \emph{labelled order types}. Note that by definition, two unlabelled point sets are combinatorially equivalent if and only if they admit labellings which are isomorphic.
The following observations, which are folklore and are explained, for instance, in~\cite{aichholzer,cardinal,scheuch}, show that (a) with regards to straight-line embeddability of planar graphs, it does not make a difference which among several combinatorially equivalent point sets we consider and (b) small perturbations of a point set do not change embeddability. 
\begin{observation}[cf.~\cite{aichholzer,cardinal,scheuch}]\label{embeddingequ}
\noindent
\begin{itemize}
    \item[(i)] If two point sets $P$ and 
    $Q$ in the plane are combinatorially equivalent, then for every planar graph $G$ it holds that $G$ embeds straight-line on $P$ if and only if it embeds straight-line on~$Q$. 
    
\item[(ii)] If $G$ is a labelled planar graph with $V(G)=\{v_1,\ldots,v_n\}$ and $P=\{p_1,\ldots,p_n\}$, $Q=\{q_1,\ldots,q_n\}$ are isomorphic labelled point sets, then $G$ admits a label-preserving straight-line embedding on $P$ if and only if it admits a label-preserving straight-line embedding on~$Q$. 

\item[(iii)] Let $G$ be planar, and let $\pi:V(G)\rightarrow P$ be a straight-line embedding of $G$ on a point set $P$. Then there exists $\varepsilon>0$ such that the following holds. For every choice of points $(x_p)_{p \in P}$ such that $\lVert x_p-p\rVert_2 <\varepsilon$ for every $p \in P$, the mapping $\pi':V(G)\rightarrow \{x_p|p \in P\}$ defined by $\pi'(v):=x_{\pi(v)}$ for every $v \in V(G)$ is a straight-line embedding of $G$.
\end{itemize}
\end{observation}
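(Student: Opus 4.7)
My plan centers on a single algebraic observation: both ``two closed line segments in the plane are disjoint'' and ``a point lies in the relative interior of a segment'' can be decided purely from the signs of the $3\times 3$ orientation determinants of the at most four points involved. Consequently, parts (i) and (ii) will reduce to the fact that a finite Boolean combination of orientation equalities and inequalities is preserved under any orientation-preserving bijection, while part (iii) will reduce to showing that the set of valid straight-line placements in $(\mathbb{R}^2)^{V(G)}$ is open.

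For (i), given a straight-line embedding $\pi:V(G)\to P$ and the orientation-preserving bijection $f:P\to Q$ furnished by combinatorial equivalence, I would set $\pi':=f\circ\pi$; injectivity is immediate. To verify that $\pi'$ is again a valid embedding I would translate every forbidden crossing or vertex-on-edge incidence into a statement about orientations of triples from $\pi'(V(G))\subseteq Q$: for instance, two segments $\pi'(a)\pi'(b)$ and $\pi'(c)\pi'(d)$ with pairwise distinct endpoints cross transversally iff $\mathrm{orient}(abc)\ne \mathrm{orient}(abd)$ and $\mathrm{orient}(cda)\ne \mathrm{orient}(cdb)$ with all four orientations nonzero, while $\pi'(c)$ lies in the relative interior of segment $\pi'(a)\pi'(b)$ iff $\mathrm{orient}(abc)=0$ together with a sign condition on a coordinate projection distinguishing ``between'' from ``outside''. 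Since $f$ preserves every orientation, each such sign pattern holds for $\pi'$ iff it holds for $\pi$, and by hypothesis all bad patterns are excluded. The converse direction is symmetric via $f^{-1}$, and (ii) follows by applying the same argument to the explicit label-preserving bijection $f(p_i)=q_i$.

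For (iii), my plan is to produce $\varepsilon$ by a compactness argument. First, injectivity of the perturbed map is preserved as soon as $\varepsilon$ is less than half the minimum pairwise distance in $P$. For every pair of vertex-disjoint edges $e_1,e_2$ of $G$, the closed segments $\pi(e_1)$ and $\pi(e_2)$ are compact and disjoint, hence at some positive Euclidean distance; similarly the distance from each vertex $\pi(v)$ to every non-incident edge $\pi(e)$ is positive. For two edges sharing an endpoint $p$, I would excise a small open disk around $\pi(p)$ from both segments and observe that the resulting compact truncated pieces are disjoint and again at positive distance. Choosing $\varepsilon$ smaller than a fixed fraction of the minimum of these finitely many positive quantities then guarantees, by continuity of the distance function, that any placement $(x_p)_{p\in P}$ with $\lVert x_p-p\rVert_2<\varepsilon$ yields segments satisfying the same disjointness and non-incidence properties.

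The only mildly non-routine step I anticipate is the treatment of collinear triples in (iii). If $\pi(p),\pi(q),\pi(r)$ are collinear and both $pq,pr\in E(G)$, then validity of $\pi$ forces $\pi(p)$ to lie strictly between $\pi(q)$ and $\pi(r)$, so the two segments leave $\pi(p)$ in opposite directions and the disk-excision argument still produces two disjoint compact pieces. Thus collinearity does not present an obstruction; everything else amounts to routine continuity and bookkeeping over the finitely many pairs of edges and vertex-edge incidences.
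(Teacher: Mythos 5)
The paper itself offers no proof of this observation---it is stated as folklore with pointers to the literature---so your attempt has to be judged on its own merits. Part (iii) is fine: the compactness/positive-distance argument, with the disk excision at shared endpoints and the remark about collinear edges leaving a common vertex in opposite directions, is the standard and correct way to prove openness of the set of valid placements. For parts (i) and (ii) your argument is correct whenever $P$ (and hence $Q$) is in general position, which is in fact the only situation in which the paper ever invokes these items (Corollary~\ref{fewpointsets} first perturbs to general position using (iii) and only then applies (i) and (ii)): there, crossing-freeness is exactly the absence of transversal crossings between vertex-disjoint edges, and your four-point orientation characterization is preserved by the equivalence bijection $f$.

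However, your opening claim---that ``a point lies in the relative interior of a segment'' is decided by the orientations of ``the at most four points involved''---is false, and this is a genuine gap for degenerate point sets. If $a,b,c$ are collinear, every orientation of a triple from $\{a,b,c\}$ vanishes, so these orientations cannot distinguish whether $c$ lies between $a$ and $b$; and the ``sign condition on a coordinate projection'' you substitute is not an order-type invariant, hence not preserved by $f$, so the argument does not go through for vertex-on-edge incidences or for collinearly overlapping edges. Betweenness can be recovered from orientations only by bringing in a fourth point $d$ off the line (then $c$ lies strictly between $a$ and $b$ if and only if $\mathrm{orient}(d,c,a)=-\mathrm{orient}(d,c,b)\neq 0$), which repairs (i) and (ii) whenever the point set is not contained in a single line. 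For fully collinear labelled point sets, item (ii) is genuinely delicate as stated: the labelled path $v_1v_2v_3$ on $p_1=(0,0)$, $p_2=(1,0)$, $p_3=(2,0)$ versus $q_1=(1,0)$, $q_2=(0,0)$, $q_3=(2,0)$ gives isomorphic labelled point sets (all orientations are $0$) with different label-preserving embeddability. You should therefore either restrict (i) and (ii) to point sets in general position---which suffices for every application in the paper---or add the fourth-point argument and treat the fully collinear case separately.
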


 In the following and in the rest of the manuscript, we use $t(n,2)$ to denote the number of labelled order types of $n$ points in the plane, and we use $t_s(n,2)$ to denote the number of \emph{simple} labelled order types of order $n$, i.e. those corresponding to $n$-point sets in the plane that are in \emph{general position} (i.e., where no three points lie on a common line). This notation is taken from the paper~\cite{alon} by Alon.

 The following consequence of Observation~\ref{embeddingequ} will be useful later.
\begin{corollary}\label{fewpointsets}
    For every $n\in \mathbb{N}$ there exists a collection $\mathcal{P}_n$ consisting of labelled $n$-point sets in $\mathbb{R}^2$ such that $|\mathcal{P}_n|= t_s(n,2)$ and such that the following holds. 
    
    For every collection $\mathcal{G}=\{G_1,G_2,\ldots,G_k\}$ of labelled $n$-vertex planar graphs, if $\mathcal{G}$ is simultaneously embeddable, then there exists $P \in \mathcal{P}_n$ such that 
    \begin{itemize}
        \item $G_1$ admits a label-preserving straight-line embedding on $P$, and
        \item each of $G_2,\ldots,G_k$ embeds straight-line on $P$ (but not necessarily in a label-preserving fashion).
    \end{itemize}
\end{corollary}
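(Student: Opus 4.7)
The plan is to construct $\mathcal{P}_n$ by choosing, for each simple labelled order type of $n$ points in the plane, one representative labelled point set; this gives a collection of exactly $t_s(n,2)$ labelled point sets by definition. The main work is then to verify that this choice suffices: given any simultaneously embeddable family $\mathcal{G}$, I want to locate a representative $P\in\mathcal{P}_n$ that witnesses the required embeddings.

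So suppose $\mathcal{G}=\{G_1,\ldots,G_k\}$ is simultaneously embeddable on some labelled $n$-point set $Q=\{q_1,\ldots,q_n\}$, with straight-line embeddings $\pi_j:V(G_j)\to Q$ for $j=1,\ldots,k$. First I would \emph{relabel} $Q$ so that $\pi_1(v_i)=q_i$; this relabelling does not affect any of the embeddings $\pi_2,\ldots,\pi_k$ (which need not be label-preserving) and turns $\pi_1$ into a label-preserving embedding of $G_1$ on $Q$.

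Next I invoke Observation~\ref{embeddingequ}(iii) for each $j\in\{1,\ldots,k\}$ to obtain an $\varepsilon_j>0$ such that any perturbation of $Q$ keeping each point within distance $\varepsilon_j$ of its original position still admits the perturbed version of the embedding $\pi_j$. Setting $\varepsilon:=\min_j \varepsilon_j$, I observe that the set of point configurations $(x_{q_1},\ldots,x_{q_n})\in (\mathbb{R}^2)^n$ in which some three points are collinear is a finite union of proper algebraic subvarieties, hence has Lebesgue measure zero in the $\varepsilon$-ball around $(q_1,\ldots,q_n)$. Therefore I can pick a perturbed labelled point set $Q'=\{q_1',\ldots,q_n'\}$ that is in general position and on which the perturbed map $\pi_j'(v):=x_{\pi_j(v)}$ is still a straight-line embedding of $G_j$ for every $j$. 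In particular $G_1$ has a label-preserving straight-line embedding on $Q'$, while each of $G_2,\ldots,G_k$ embeds straight-line on $Q'$ (unlabelled).

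Since $Q'$ is in general position, its labelled order type appears in $\mathcal{P}_n$, so there exists $P\in\mathcal{P}_n$ which is isomorphic to $Q'$ as a labelled point set; in particular $P$ and $Q'$ are also combinatorially equivalent as unlabelled sets. Applying Observation~\ref{embeddingequ}(ii) to $G_1$ transfers its label-preserving embedding from $Q'$ to $P$, and applying Observation~\ref{embeddingequ}(i) to each $G_j$ ($j\ge 2$) transfers its unlabelled embedding from $Q'$ to $P$. This yields the desired $P\in\mathcal{P}_n$ and completes the proof.

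The only delicate point is the perturbation step: one has to perturb \emph{once}, producing a single $Q'$ that simultaneously preserves all $k$ embeddings \emph{and} lies in general position. The first is handled by taking the minimum of the $\varepsilon_j$ from Observation~\ref{embeddingequ}(iii), and the second by the standard measure-theoretic observation that general position is a full-measure condition on any open ball of configurations; the argument is otherwise entirely routine.
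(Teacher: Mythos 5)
Your proposal is correct and follows essentially the same approach as the paper: pick one representative per simple labelled order type, perturb the witnessing point set into general position using the minimum of the $\varepsilon_j$ from Observation~\ref{embeddingequ}(iii), relabel so that $G_1$'s embedding is label-preserving, and transfer to the representative via Observation~\ref{embeddingequ}(i) and (ii). The only differences are cosmetic: you relabel before perturbing rather than after, and you spell out the (correct) measure-zero justification for why a general-position perturbation exists, which the paper leaves implicit.
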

\begin{proof}
    Define $\mathcal{P}_n$ by selecting for each of the $t_s(n,2)$ distinct simple labelled order types one representative element, and adding it to $\mathcal{P}_n$. Then $|\mathcal{P}_n|= t_s(n,2)$. Now, let any collection $\mathcal{G}=\{G_1,\ldots,G_k\}$ of labelled $n$-vertex planar graphs be given, and suppose that $\mathcal{G}$ is simultaneously embeddable. Let $Q$ be an $n$-point set such that each of $G_1,\ldots,G_k$ embeds straight-line on $Q$. By applying item (iii) of Observation~\ref{embeddingequ} for each of the $G_i, i=1,\ldots,k$, we obtain that there exist $\varepsilon_1,\ldots,\varepsilon_k>0$ such that perturbing $Q$ by a distance of less than $\varepsilon_i$ at each point preserves the straight-line embeddability of $G_i$. Hence, perturbing $Q$ by a distance of less than $\varepsilon:=\min\{\varepsilon_1,\ldots,\varepsilon_k\}$ at each point preserves simultaneous embeddability of $\{G_1,\ldots,G_k\}$. Therefore, possibly after applying a suitable perturbation, we may assume w.l.o.g. that $Q$ is in general position. Let $V(G_1)=\{v_1,\ldots,v_n\}$ be the labelling of the vertices of $G_1$ and let $\pi_1:V(G_1) \rightarrow Q$ be a straight-line embedding of $G_1$ on $Q$. Now consider the labelling $Q=\{q_1,\ldots,q_n\}$ of $Q$, defined by setting $q_i:=\pi_1(v_i)$ for $i=1,\ldots,n$. Then clearly, $G_1$ has a label-preserving straight-line embedding on $\{q_1,\ldots,q_n\}$. Let $P \in \mathcal{P}_n$ be such that $P$ and $\{q_1,\ldots,q_n\}$ are isomorphic labelled point sets. By Observation~\ref{embeddingequ}, (ii) we have that $G_1$ admits a label-preserving straight-line embedding on $P$. Since $Q$ and $P$ have isomorphic labellings, they are combinatorially equivalent. Thus, by Observation~\ref{embeddingequ}, (i) each of $G_2,\ldots,G_k$ embeds straight-line on $P$, as desired. This concludes the proof. 
\end{proof}
\section{Proof of Theorem~\ref{thm:main}}\label{sect:mainproof}
We prepare the proof by introducing a specific family of labelled planar graphs (which are stacked triangulations), that has already been studied in a similar context in~\cite{cardinal,expo,scheuch}. 
\begin{definition}[cf.~\cite{scheuch}, Definition~2 and~\cite{cardinal}, Section~3]
For every integer $n \ge 4$, we define a set $\mathcal{T}_n$ of labelled planar triangulations, each with vertex-set $V_n=\{v_1,\ldots,v_n\}$, inductively as follows:
\noindent
\begin{itemize}
    \item $\mathcal{T}_4$ consists only of the complete graph $K_4$ on vertex-set $V_4=\{v_1,v_2,v_3,v_4\}$.
    \item If $T$ is a labelled graph in $\mathcal{T}_{n-1}$ with $n \ge 5$, and $v_iv_jv_k$ is a facial triangle in $T$, then the labelled planar graph obtained from $T$ by adding the new vertex $v_n$ and connecting it to $v_i, v_j, v_k$ is a member of $\mathcal{T}_n$. 
\end{itemize}
\end{definition}
It is crucial to notice that in the above definition and in the following $\mathcal{T}_n$ is to be understood as a set of \emph{labelled planar graphs}, and distinct elements of the set are distinguished even if their underlying planar graphs are isomorphic. Concretely, it may happen that the same planar graph but with different labellings occurs several times in $\mathcal{T}_n$.  Regardless, these different labellings are considered as different members of $\mathcal{T}_n$. For our proof of Theorem~\ref{thm:main} we will need a few properties of the family $\mathcal{T}_n$, summarized in the following lemma.
\begin{lemma}[cf.~\cite{scheuch}, Lemma~1 and~\cite{cardinal}, Lemma~1,2]~\label{lemma:reconstruct}
\noindent
\begin{itemize}
    \item[(i)] For every $n \ge 4$, the family $\mathcal{T}_n$ contains exactly $2^{n-4}(n-3)!$ distinct labelled planar graphs. 
    \item[(ii)] Let $P$ be any set of $n$ points in the plane. Then for every bijection $\pi:\{v_1,\ldots,v_n\}\rightarrow P$ there exists at most one $T \in \mathcal{T}_n$ with the property that placing the vertex $v_i$ of $T$ at the point $\pi(v_i)$ of $P$, for every $i=1,\ldots,n$, defines a straight-line embedding of $T$. 
\end{itemize}
\end{lemma}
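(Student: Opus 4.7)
The plan is to prove both parts by induction on $n$, exploiting the recursive structure of $\mathcal{T}_n$.

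For part (i), I would establish the recurrence $|\mathcal{T}_n|=(2n-6)\,|\mathcal{T}_{n-1}|$ for $n\ge 5$ with base case $|\mathcal{T}_4|=1$, which telescopes to $|\mathcal{T}_n|=\prod_{k=5}^n (2k-6) = 2^{n-4}(n-3)!$. The factor $2n-6$ comes from Euler's formula, since every planar triangulation on $n-1$ vertices has exactly $2(n-1)-4=2n-6$ faces, each of which is a candidate facial triangle into which $v_n$ may be attached. The point that requires care is injectivity of the construction map sending a pair $(T',F)$, with $T'\in\mathcal{T}_{n-1}$ and $F$ a facial triangle of $T'$, to the resulting element of $\mathcal{T}_n$; this follows because from any $T\in\mathcal{T}_n$ we can uniquely recover $T'$ as the induced subgraph $T-v_n$ and $F$ as the triangle spanned by the three neighbours of $v_n$ in $T$, so distinct input pairs yield distinct labelled triangulations.

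For part (ii), I would again induct on $n$, with the trivial base $n=4$ where $|\mathcal{T}_4|=1$. For the inductive step, suppose $T_1,T_2\in\mathcal{T}_n$ both admit straight-line embeddings placing $v_i$ at $\pi(v_i)$ for $i=1,\ldots,n$, and set $T_i':=T_i-v_n\in\mathcal{T}_{n-1}$. Restricting the embedding of $T_i$ to $V_{n-1}=\{v_1,\ldots,v_{n-1}\}$ yields a straight-line embedding of $T_i'$ on $\pi(V_{n-1})$ that places $v_j$ at $\pi(v_j)$ for $j\le n-1$, so the inductive hypothesis applied to $\pi|_{V_{n-1}}$ yields $T_1'=T_2'=:T'$. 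By construction of $\mathcal{T}_n$, the neighbourhood of $v_n$ in $T_i$ is the vertex set of the facial triangle of $T'$ into which $v_n$ was inserted, and in the straight-line embedding this triangle is precisely the one bounding the face of the planar subdivision determined by $T'$ that contains $\pi(v_n)$. Since $\pi(v_n)$ lies in the open interior of a unique such face (otherwise some edge of $T'$ would pass through the point $\pi(v_n)$, contradicting that we have a crossing-free straight-line embedding of $T_i$), both $T_1$ and $T_2$ attach $v_n$ to the same three vertices, so $T_1=T_2$.

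The main subtlety is to handle uniformly whether $v_n$ ends up inside or outside the convex hull of $\pi(V_{n-1})$: the inductive definition of $\mathcal{T}_n$ permits $v_n$ to be attached to any of the $2(n-1)-4$ facial triangles of $T'$, including the outer one, and correspondingly the face of the planar subdivision induced by the embedding of $T'$ that contains $\pi(v_n)$ may be bounded or unbounded. The face-identification argument above does not distinguish between these two cases, so everything else is mechanical.
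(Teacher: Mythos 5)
Your proof is correct. Note that the paper does not prove this lemma itself but imports it from the cited references (Scheucher et al., Lemma~1, and Cardinal et al., Lemmas~1 and~2), and your argument is essentially the standard one found there: the counting recurrence $|\mathcal{T}_n|=(2n-6)|\mathcal{T}_{n-1}|$ via Euler's formula with injectivity of the pair $(T',F)\mapsto T$, and for uniqueness the observation that $T-v_n$ and the face of the drawing of $T'$ containing $\pi(v_n)$ are both recoverable, so the stacking is forced by point location. The only point you leave implicit is that members of $\mathcal{T}_{n-1}$ are $3$-connected, which is what makes ``facial triangle'' combinatorially well-defined and lets you identify the combinatorial face of $T'$ to which $v_n$ is attached with a face of the geometric subdivision (via Whitney's theorem, which the paper itself invokes in the same way in the proof of Theorem~\ref{thm:main3}); this is a one-line addition, not a gap.
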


We need the following simple and direct consequence of the previous lemma. 

\begin{corollary}\label{lemma:probability}
    Let $n\ge 4$ and let $\mathbf{G}$ denote a random labelled planar graph drawn uniformly from the set $\mathcal{T}_n$. Then the following hold. 
    \begin{itemize}
        \item[(i)] If $P=\{p_1,\ldots,p_n\}$ is a labelled $n$-point set in the plane, then 
        $$\mathbb{P}(\mathbf{G}\text{ has label-preserving straight-line embedding on }P)\le \frac{1}{2^{n-4}(n-3)!}.$$
        \item[(ii)] If $P$ is an $n$-point set in the plane, then
        $$\mathbb{P}(\mathbf{G}\text{ embeds straight-line on }P)\le \frac{16n(n-1)(n-2)}{2^n}.$$
    \end{itemize}
\end{corollary}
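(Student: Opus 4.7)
Both statements will follow in an essentially immediate way from Lemma~\ref{lemma:reconstruct}, combined with a counting argument; the main work is bookkeeping.

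For part~(i), I would simply translate the statement into a counting problem and apply Lemma~\ref{lemma:reconstruct}(ii) directly. Given the labelled point set $P=\{p_1,\ldots,p_n\}$, consider the specific bijection $\pi:\{v_1,\ldots,v_n\}\to P$ defined by $\pi(v_i):=p_i$. A graph $T\in\mathcal{T}_n$ admits a label-preserving straight-line embedding on $P$ exactly when $\pi$ itself is a straight-line embedding of $T$. By Lemma~\ref{lemma:reconstruct}(ii), at most one $T\in\mathcal{T}_n$ has this property, so the number of favorable outcomes is at most $1$. Dividing by $|\mathcal{T}_n|=2^{n-4}(n-3)!$ from Lemma~\ref{lemma:reconstruct}(i) yields the claimed bound.

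For part~(ii), I would union-bound over the $n!$ possible ways of labelling the points of $P$. For each labelling of $P$ as $\{p_1,\ldots,p_n\}$, part~(i) (or directly Lemma~\ref{lemma:reconstruct}(ii)) says that at most one $T\in\mathcal{T}_n$ admits a label-preserving straight-line embedding on this labelled copy of $P$. Since any $T\in\mathcal{T}_n$ that embeds straight-line on $P$ must, via its embedding, induce at least one such labelling of $P$, the total number of $T\in\mathcal{T}_n$ embedding straight-line on $P$ is at most $n!$. Dividing by $|\mathcal{T}_n|=2^{n-4}(n-3)!$ gives
\[
\mathbb{P}(\mathbf{G}\text{ embeds straight-line on }P)\le\frac{n!}{2^{n-4}(n-3)!}=\frac{16\,n(n-1)(n-2)}{2^n},
\]
as required.

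There is no real obstacle; the only delicate point is to convince oneself that Lemma~\ref{lemma:reconstruct}(ii) applies once per labelling and that an embedded $T$ is counted at least once across the $n!$ labellings, which is immediate because choosing any straight-line embedding $\pi$ of $T$ into $P$ supplies a labelling of $P$ witnessing $T$. The final equality is a short algebraic simplification.
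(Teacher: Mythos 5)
Your proposal is correct and matches the paper's proof essentially verbatim: part (i) follows from Lemma~\ref{lemma:reconstruct}(ii) applied to the single bijection $\pi(v_i):=p_i$ together with $|\mathcal{T}_n|=2^{n-4}(n-3)!$, and part (ii) is the same union bound over the $n!$ relabellings (permutations $\tau$) of $P$, followed by the same algebraic simplification. No gaps.
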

\begin{proof}
Let us first prove $(i)$. Note that by definition a graph $T \in \mathcal{T}_n$ has a label-preserving straight-line embedding on $\{p_1,\ldots,p_n\}$ if and only if the mapping $\pi:\{v_1,\ldots,v_n\} \rightarrow P$, $\pi(v_i):=p_i$ forms a straight-line embedding of $T$. By item $(ii)$ of Lemma~\ref{lemma:reconstruct} there is at most one member of $\mathcal{T}_n$ with this property. Since $\mathbf{G}$ follows a uniform distribution on $\mathcal{T}_n$, this implies, using item $(i)$ of Lemma~\ref{lemma:reconstruct}, $$\mathbb{P}(\mathbf{G}\text{ has label-preserving straight-line embedding on }P) \le \frac{1}{|\mathcal{T}_n|}=\frac{1}{2^{n-4}(n-3)!},$$ as desired.
Let us now prove item $(ii)$. To do so, let $p_1,\ldots,p_n$ be some enumeration of the points in $P$. Now, note that any labelled planar graph $G$ embeds straight-line on $P$ if and only if it has a label-preserving straight-line embedding on the relabelled point-set $\{p_{\tau(1)},\ldots,p_{\tau(n)}\}$ for some permutation $\tau$ of $\{1,\ldots,n\}$. We therefore have, using a union bound over the choices of $\tau$,
    $$\mathbb{P}(\mathbf{G}\text{ embeds straight-line on }P)$$ $$\le \sum_{\tau \in S_n}{\mathbb{P}(\mathbf{G}\text{ has label-preserving straight-line embedding on }\{p_{\tau(1)},\ldots,p_{\tau(n)}\})}$$ $$\le \frac{n!}{2^{n-4}(n-3)!}=\frac{16n(n-1)(n-2)}{2^n},$$ as claimed. 
\end{proof}

In the rest of the section we proceed in two parts. First we show Theorem~\ref{thm:aux} below, which gives a general upper bound on $\sigma(n)$ in terms of $t_s(n,2)$. Secondly, we use an upper bound due to Alon~\cite{alon} on $t_s(n,2)$ to complete the proof of Theorem~\ref{thm:main}.

\begin{theorem}\label{thm:aux}
For every integer $n\ge 16$, we have $$\sigma(n) \le \left\lfloor \frac{\log_2(t_s(n,2))-(n-4)-\log_2((n-3)!)}{n-\log_2(16n(n-1)(n-2))}\right\rfloor +2.$$  
\end{theorem}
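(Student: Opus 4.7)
The plan is a direct probabilistic argument using the tools set up in Section~\ref{sect:ordertypes}. Let $k := \left\lfloor \frac{\log_2(t_s(n,2)) - (n-4) - \log_2((n-3)!)}{n - \log_2(16n(n-1)(n-2))} \right\rfloor + 2$; I would independently draw $k$ random labelled planar graphs $\mathbf{G}_1, \ldots, \mathbf{G}_k$, each uniformly distributed on $\mathcal{T}_n$, and show that with positive probability the collection $\{\mathbf{G}_1, \ldots, \mathbf{G}_k\}$ forms a conflict collection. This would then witness $\sigma(n) \le k$, which is the statement of the theorem.

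To bound the probability that $\{\mathbf{G}_1, \ldots, \mathbf{G}_k\}$ is simultaneously embeddable, I would invoke Corollary~\ref{fewpointsets}, which reduces this event to the existence of some $P$ in a fixed collection $\mathcal{P}_n$ of $t_s(n,2)$ labelled point sets such that $\mathbf{G}_1$ admits a label-preserving embedding on $P$ and each of $\mathbf{G}_2, \ldots, \mathbf{G}_k$ merely embeds straight-line on $P$. For a fixed $P \in \mathcal{P}_n$, the independence of the $\mathbf{G}_i$ combined with the two parts of Corollary~\ref{lemma:probability} yields
$$\mathbb{P}\bigl(\mathbf{G}_1 \text{ has a label-preserving embedding on }P \text{ and }\mathbf{G}_2,\ldots,\mathbf{G}_k \text{ embed on }P\bigr) \le \frac{1}{2^{n-4}(n-3)!}\cdot\left(\frac{16n(n-1)(n-2)}{2^n}\right)^{k-1}.$$
A union bound over the $t_s(n,2)$ choices of $P \in \mathcal{P}_n$ then gives an upper bound on the probability of simultaneous embeddability.

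To conclude, I would show that this bound is strictly less than $1$ for the chosen value of $k$. Taking $\log_2$, the inequality rearranges to
$$(k-1)\bigl(n - \log_2(16n(n-1)(n-2))\bigr) > \log_2(t_s(n,2)) - (n-4) - \log_2((n-3)!).$$
Here the hypothesis $n \ge 16$ is exactly what is needed to ensure that $n - \log_2(16n(n-1)(n-2)) > 0$ (since $2^{16} = 65536 > 53760 = 16\cdot 16\cdot 15\cdot 14$, but the reverse inequality holds at $n=15$), so we may divide without flipping the inequality. By definition of the floor, $k - 1 = \lfloor x \rfloor + 1 > x$ for the real number $x$ on the right-hand side, giving the required strict inequality. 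Hence the union bound yields a probability strictly less than $1$, so there must exist a realization $G_1, \ldots, G_k \in \mathcal{T}_n$ forming a conflict collection.

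The calculation is essentially routine once the setup is in place. The only subtle point — and the thing I would verify carefully — is the asymmetric role of $G_1$ versus $G_2, \ldots, G_k$ in Corollary~\ref{fewpointsets}, which is what makes the numerator in the stated bound contain the extra term $-(n-4) - \log_2((n-3)!)$ rather than being a pure union-bound expression; this asymmetry is precisely the saving that leads to the logarithmic bound in Theorem~\ref{thm:main}, since the term $\log_2 t_s(n,2) = \Theta(n \log n)$ is absorbed by the large first-factor savings and the denominator is $\Theta(n)$.
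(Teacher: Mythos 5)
Your proposal is correct and follows essentially the same argument as the paper: the same choice of $k$, the same use of Corollary~\ref{fewpointsets} to reduce to a union bound over $\mathcal{P}_n$, the same application of both parts of Corollary~\ref{lemma:probability} with independence, and the same floor-function rearrangement (including the correct observation that $n\ge 16$ is exactly what makes the denominator positive). The only detail the paper adds is the trivial remark that the resulting multi-set may contain repetitions and one passes to the underlying set of at most $k$ graphs.
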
 
\begin{proof}
Let $k=\left\lfloor \frac{\log_2(t_s(n,2))-(n-4)-\log_2((n-3)!)}{n-\log_2(16n(n-1)(n-2))}\right\rfloor +2$, and let $\mathbf{G}_1,\ldots,\mathbf{G}_k$ denote a sequence of $k$ random planar graphs sampled independently from the uniform distribution on the set $\mathcal{T}_n$.

Let $\mathcal{G}:=\{\mathbf{G}_1,\ldots,\mathbf{G}_k\}$ denote the resulting multi-set of $k$ planar graphs. In the following we work towards arguing that the probability that the randomly generated multi-set $\mathcal{G}$ of planar $n$-vertex graphs is simultaneously embeddable is strictly less than $1$. To bound the probability that $\mathcal{G}$ is simultaneously embeddable, we recall Corollary~\ref{fewpointsets} which gives us a collection $\mathcal{P}_n$ of at most $t_s(n,2)$ labelled $n$-point sets with the property that if $\mathcal{G}$ is simultaneously embeddable, then there exists $P \in \mathcal{P}_n$ such that $\mathbf{G}_1$ has a label-preserving straight-line embedding on $P$, and each of $\mathbf{G}_2,\ldots,\mathbf{G}_k$ embeds straight-line on $P$. 

For any fixed set $P \in \mathcal{P}_n$, we can use the independence of $\mathbf{G}_1,\ldots,\mathbf{G}_k$ and Corollary~\ref{lemma:probability} to compute that
$$\mathbb{P}\left(\{\mathbf{G}_1 \text{ has label-pres. str.-line emb. on }P\} \wedge \bigwedge_{i=2}^{k}\{\mathbf{G}_i \text{ emb. str.-line on }P\}\right)$$
$$=\mathbb{P}(\mathbf{G}_1 \text{ has label-pres. str.-line emb. on }P)\cdot\prod_{i=2}^{k}{\mathbb{P}(\mathbf{G}_i \text{ emb. str.-line on }P)}$$
$$\le \frac{1}{2^{n-4}(n-3)!}\cdot \left(\frac{16n(n-1)(n-2)}{2^n}\right)^{k-1}.$$

Using a union-bound over all possible choices of $P \in \mathcal{P}_n$ we conclude:
$$\mathbb{P}(\mathcal{G} \text{ is simultaneously embeddable})$$ $$\le \sum_{P \in \mathcal{P}_n}\mathbb{P}\left(\{\mathbf{G}_1 \text{ has label-pres. str.-line emb. on }P\} \wedge \bigwedge_{i=2}^{k}\{\mathbf{G}_i \text{ emb. str.-line on }P\}\right)$$
$$\le |\mathcal{P}_n|\cdot \frac{1}{2^{n-4}(n-3)!}\cdot \left(\frac{16n(n-1)(n-2)}{2^n}\right)^{k-1}$$ $$= t_s(n,2) \cdot \frac{1}{2^{n-4}(n-3)!}\cdot \left(\frac{16n(n-1)(n-2)}{2^n}\right)^{k-1}.$$ Note that $$k-1=\left\lfloor \frac{\log_2(t_s(n,2))-(n-4)-\log_2((n-3)!)}{n-\log_2(16n(n-1)(n-2))}\right\rfloor+1> \frac{\log_2(t_s(n,2))-(n-4)-\log_2((n-3)!)}{n-\log_2(16n(n-1)(n-2))},$$ which implies after rearranging (using $n-\log_2(16n(n-1)(n-2))>0$, since $n \ge 16$) that $$\log_2(t_s(n,2))-(n-4)-\log_2((n-3)!)-(k-1)(n-\log_2(16n(n-1)(n-2)))<0.$$ Using this, we obtain:
$$\mathbb{P}(\mathcal{G} \text{ is simultaneously embeddable})$$ $$\le t_s(n,2) \cdot \frac{1}{2^{n-4}(n-3)!}\cdot \left(\frac{16n(n-1)(n-2)}{2^n}\right)^{k-1}$$ $$= 2^{\log_2(t_s(n,2))-(n-4)-\log_2((n-3)!)-(k-1)(n-\log_2(16n(n-1)(n-2)))}<2^0=1.$$  The probabilistic method now implies that there exists at least one multi-set consisting of $k$ planar graphs on $n$ vertices that is not simultaneously embeddable. The corresponding subset of at most $k$ planar graphs, obtained by removing repeated elements, now witnesses that $\sigma(n)\le k=\left\lfloor \frac{\log_2(t_s(n,2))-(n-4)-\log_2((n-3)!)}{n-\log_2(16n(n-1)(n-2))}\right\rfloor +2$, as desired.
\end{proof}

We are now ready to give the proof of Theorem~\ref{thm:main}. For this purpose, we use a result by Alon~\cite{alon} who provided asymptotically precise estimates for $t_s(n,2)$ in 1986. In contrast, the precise asymptotics of the number of unlabelled order types remains currently unknown, see the discussion in~\cite{goaoc}.

\begin{theorem}[cf.~\cite{alon}, Corollary 4.2]\label{thm:alon}
It holds that 
$$t_s(n,2)=n^{(4+o(1))n}.$$
\end{theorem}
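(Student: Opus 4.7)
My plan is to split the asymptotic equality $t_s(n,2)=n^{(4+o(1))n}$ into matching upper and lower estimates $n^{(4\pm o(1))n}$ and prove each separately.

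For the upper bound, I would encode every simple labelled order type on $n$ points as the sign vector in $\{-1,+1\}^{\binom{n}{3}}$ of the $\binom{n}{3}$ orientation polynomials
\[\Delta_{ijk}(x_1,y_1,\ldots,x_n,y_n)=\det\begin{pmatrix}1 & 1 & 1\\ x_i & x_j & x_k\\ y_i & y_j & y_k\end{pmatrix},\qquad 1\le i<j<k\le n,\]
which are polynomials of degree $3$ in the $N:=2n$ coordinate variables. Two point configurations in general position give the same labelled simple order type if and only if their associated $\pm 1$-sign vectors agree, so $t_s(n,2)$ is at most the number of such sign patterns realized by this polynomial family. Warren's theorem (the same ingredient invoked in Section~\ref{sect:smalln}) bounds the number of $\pm 1$-sign patterns attained by $m$ real polynomials of degree at most $d$ in $N$ variables by $(Cdm/N)^{N}$ for an absolute constant $C$. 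Plugging in $m=\binom{n}{3}$, $d=3$, $N=2n$ yields $(O(n^{2}))^{2n}=n^{(4+o(1))n}$, as required.

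For the lower bound, I would construct a family of $n^{(4-o(1))n}$ labelled simple configurations with pairwise distinct order types by inductively placing points in distinct cells of a line arrangement. Starting from a generic simple configuration of $i-1$ points, the $\binom{i-1}{2}$ spanned lines partition the plane into $\Theta(i^{4})$ cells; choosing the $i$-th labelled point $q_{i}$ in any such cell prescribes the orientation $\Delta_{jki}$ for every $j<k<i$ and therefore determines the portion of the eventual sign vector indexed by triples through $i$. Making cell choices independently for $i=1,\ldots,n$ yields $\prod_{i=1}^{n}\Theta(i^{4})=(n!)^{4+o(1)}=n^{(4-o(1))n}$ labelled configurations whose sign vectors pairwise differ, hence pairwise distinct labelled simple order types after a final generic perturbation to enforce general position throughout.

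The main obstacle will be the lower bound: the upper bound reduces mechanically to Warren's theorem once the algebraic encoding of orientations as polynomial signs is in place, whereas the lower bound requires one to verify carefully that independent cell-choice decisions at successive inductive steps genuinely produce inequivalent sign vectors, and that one can maintain general position while doing so. Both assertions hold essentially by construction, since distinct cells of the current line arrangement differ in at least one of the signs $\sgn(\Delta_{jki})$ being recorded, but making this bookkeeping rigorous across all $n$ sequential decisions constitutes the bulk of the technical work.
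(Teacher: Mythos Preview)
The paper does not prove this statement; Theorem~\ref{thm:alon} is quoted directly from Alon~\cite{alon} (Corollary~4.2 there) and then used as a black box in the proof of Theorem~\ref{thm:main}. There is therefore no proof in the paper to compare your proposal against.

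Your outline is essentially a reconstruction of Alon's original argument and is correct in its broad strokes. Two minor remarks. First, the orientation determinants $\Delta_{ijk}$ have degree~$2$, not~$3$, in the coordinate variables (the first row of the matrix consists of the constants~$1$); the paper itself records $d=2$ when it applies Warren's theorem in the proof of Proposition~\ref{prop:poly} to obtain a sharper explicit bound for small~$n$. This slip does not affect the asymptotic $(O(n^{2}))^{2n}=n^{(4+o(1))n}$. Second, for the lower bound you also need that the $\binom{i-1}{2}$ spanned lines are themselves in general position (no two parallel, no three concurrent) so that the cell count at step~$i$ is genuinely $\Theta(i^{4})$; this holds for generic placements of the first $i-1$ points, but should be made explicit alongside the bookkeeping you already flagged.
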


Theorem~\ref{thm:main} can now be derived as an immediate consequence.

\begin{proof}[Proof of Theorem~\ref{thm:main}]
Plugging in the asymptotic formula $t_s(n,2)=n^{(4+o(1))n}$ from Theorem~\ref{thm:alon} into Theorem~\ref{thm:aux}, we obtain for $n \ge 16$, using the estimate $n!=n^{(1-o(1))n}$:
$$\sigma(n)\le  \frac{\log_2(t_s(n,2))-(n-4)-\log_2((n-3)!)}{n-\log_2(16n(n-1)(n-2))} +2$$
$$=\frac{(4+o(1))n\log_2(n)-(1-o(1))n-(1-o(1))n\log_2(n)}{(1-o(1))n}+2$$
$$=(3+o(1))\log_2(n)+1=(3+o(1))\log_2(n),$$
as claimed by the theorem.
\end{proof}

\section{Better bounds on the number of order types for small $n$}\label{sect:smalln}

In this section, we prove Theorem~\ref{thm:main2}, which provides better bounds on $\sigma(n)$ for moderately small values of $n$. To do so, we would like to again employ Theorem~\ref{thm:aux}. This time however, instead of the asymptotic estimate by Alon in Theorem~\ref{thm:alon}, a concrete bound is required. Looking into the proof of Theorem~\ref{thm:alon} in~\cite{alon}, it provides an explicit upper bound on $t_s(n,2)$ of the form:
$$t_s(n,2) \le \left(8\left(\frac{en}{2}\right)^2\ln(n/2))\right)^{2n+\ln(n/2)}.$$
We noticed that for small values of $n$, this bound can be considerably improved. Instead of using Alon's result directly, we make use of a theorem of Warren regarding the number of sign-patterns of a set of multivariate polynomials, to get a slightly more precise (but asymptotically of course equivalent) bound for $t_s(n,2)$. It turns out that this yields a considerable improvement on the resulting bound for $\sigma(n)$ when $n$ is moderately small. 

The following is the relevant theorem of Warren mentioned above.
\begin{theorem}[cf. Theorem~2 in~\cite{warren}]\label{thm:warren}
Let $Q_1, \ldots, Q_m$ be polynomials in $n$ variables, each of degree $d$ or less. The number of topological components of the set $\{x \in \mathbb{R}^n|\forall i\in \{1,\ldots,m\}: Q_i(x)\neq 0\}$ of common non-zeroes is at most $$2(2d)^n\sum_{k=0}^{n}{2^k\binom{m}{k}}.$$
\end{theorem}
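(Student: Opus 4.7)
The plan is to prove this by a Morse-theoretic argument: for each connected component $C$ of $U := \{x \in \mathbb{R}^n : Q_i(x) \neq 0 \text{ for all } i\}$, identify a critical point of a generic linear function on a suitable stratum of the arrangement of hypersurfaces $V(Q_i)$, and then bound the total count of such critical points via B\'ezout's theorem together with a combinatorial sum over subsets of indices.

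First, by a standard perturbation and lower-semicontinuity argument, it suffices to handle the generic case in which the zero sets of the $Q_i$ meet transversally and, for a chosen generic vector $v \in \mathbb{R}^n$, the linear functional $\ell(x) := \langle v, x \rangle$ restricts to a Morse function on every nonempty stratum $V_I := \bigcap_{i \in I} V(Q_i)$, $I \subseteq \{1,\ldots,m\}$. For each bounded component $C$ of $U$, the infimum of $\ell$ on $\overline{C}$ is attained on the boundary and, under the genericity hypothesis, lies at a unique non-degenerate critical point of $\ell|_{V_I}$ for some uniquely determined $I$. Unbounded components are dealt with by intersecting $U$ with a large ball, contributing the leading factor of $2$ in the final bound.

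To count the critical points, fix $I$ with $|I| = k \le n$. Such a critical point is a common solution of $n$ polynomial equations: the $k$ equations $\{Q_i = 0\}_{i \in I}$, together with $n-k$ determinantal equations expressing that $v$ lies in the linear span of $\{\nabla Q_i\}_{i \in I}$; a routine check bounds the degrees of these equations by $2d$. B\'ezout's theorem then yields at most $(2d)^n$ isolated real solutions. Moreover, each such critical point can serve as the minimum-witness for at most $2^k$ distinct components of $U$, because near a transverse meeting of $k$ smooth hypersurfaces exactly $2^k$ sign-orthants of $U$ are locally present. Summing over all $I$ with $|I| \le n$ produces the claimed bound $2(2d)^n \sum_{k=0}^{n} 2^k \binom{m}{k}$.

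The main obstacle will be the careful bookkeeping of degree estimates and genericity arguments, in particular verifying that the determinantal conditions really have degree at most $2d$ and that the minimum-of-$\ell$ correspondence is injective after perturbation. An alternative route, closer to Warren's original approach, would apply Milnor-type Betti-number bounds directly to a perturbation of the product polynomial $\prod_i Q_i$; either way, the combinatorial structure of the final bound emerges naturally from summing the local contributions over subsets $I$.
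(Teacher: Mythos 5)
First, note that the paper does not prove this statement at all: it is imported verbatim as Theorem~2 of Warren~\cite{warren} and used as a black box. So the only question is whether your sketch would actually reconstitute Warren's argument. It has the right genre (Morse-theoretic critical-point counting, B\'ezout, a sum over index sets), but two of the quantitative pivots fail as you have set them up.

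The first is the degree bookkeeping, which is not a ``routine check'' and in fact does not give $2d$. The condition that $v$ lies in the span of $\{\nabla Q_i\}_{i\in I}$ with $|I|=k$ is expressed by $(k+1)\times(k+1)$ minors of a matrix whose entries are partial derivatives of degree $d-1$, hence polynomials of degree up to $(k+1)(d-1)$; B\'ezout then gives $d^k\bigl((k+1)(d-1)\bigr)^{n-k}$, which exceeds $(2d)^n$ already for moderate $n$ and $k$ (e.g.\ $k\approx n/2$ gives roughly $(n/2)^{n/2}d^n$ versus $2^nd^n$). The Lagrange-multiplier reformulation fares no better: $n+k$ equations of degree $\le d$ in $n+k$ unknowns gives $d^{n+k}$. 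The factor $(2d)^n$ in Warren's bound comes from a different device, Milnor's sum-of-squares trick: each intersection $\bigcap_{i\in I}V(Q_i)$ is replaced by the single smooth hypersurface $\sum_{i\in I}Q_i^2=\delta$, which has degree $2d$, and the critical points of a generic linear functional on \emph{that} hypersurface number at most $2d(2d-1)^{n-1}\le (2d)^n$. Without this (or an equivalent) step your route does not reach the stated constant.

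The second is the perturbation. Perturbing the coefficients of the $Q_i$ to achieve transversality can \emph{merge} components of the nonvanishing set (take $Q=x^2$ on $\mathbb{R}$, with two complementary components, versus $Q=x^2+\varepsilon$, with one), so the lower-semicontinuity you invoke goes the wrong way. Warren instead shrinks the set: pick one point in each component, choose $\varepsilon$ smaller than $\min_i|Q_i|$ at all chosen points, and observe that these points lie in pairwise distinct components of the complement of the $2m$ hypersurfaces $\{Q_i=\varepsilon\}$ and $\{Q_i=-\varepsilon\}$, which for generic $\varepsilon$ are nonsingular and in general position. The factor $2^k\binom{m}{k}$ then arises because a nonempty $k$-fold intersection of these $2m$ hypersurfaces can use at most one of the disjoint pair $\{Q_i=\varepsilon\},\{Q_i=-\varepsilon\}$ for each $i$ --- not from your local-orthant multiplicity argument, which would in any case need justification that the orthant-to-component map is well defined. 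In short: the skeleton is recognizably Warren's, but both places where the specific constants $2d$ and $2^k$ must be produced are supported by mechanisms that do not deliver them.
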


Using this theorem, we obtain the following upper bound on $t_s(n,2)$. 
\begin{proposition}\label{prop:poly}
For every $n \ge 3$, we have $$t_s(n) \le 2\cdot 16^n\cdot \sum_{k=0}^{2n}{2^k\binom{\binom{n}{3}}{k}}.$$
\end{proposition}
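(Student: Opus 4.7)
The plan is to apply Warren's theorem (Theorem~\ref{thm:warren}) to a natural polynomial system whose sign patterns encode simple labelled order types. I would parametrize a labelled $n$-point configuration $P = \{p_1,\ldots,p_n\}$ by the vector of its coordinates $(x_1, y_1, \ldots, x_n, y_n) \in \mathbb{R}^{2n}$, so that the ambient space is $\mathbb{R}^N$ with $N = 2n$. For each unordered triple $\{i, j, k\}$ with $1 \le i < j < k \le n$, I would introduce the polynomial
$$Q_{ijk}(x_1,y_1,\ldots,x_n,y_n) \;:=\; \det\begin{pmatrix} 1 & 1 & 1 \\ x_i & x_j & x_k \\ y_i & y_j & y_k \end{pmatrix},$$
whose sign is, by definition, the orientation of the ordered triple $p_ip_jp_k$. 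Expanding along the first row shows that each $Q_{ijk}$ has degree at most $2$ in the $2n$ variables, and there are $m := \binom{n}{3}$ such polynomials.

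The next step is to match simple labelled order types with sign patterns. A labelled point set $P$ is in general position precisely when $Q_{ijk}(P) \ne 0$ for all triples. Moreover, by the definition of isomorphy, the simple labelled order type of $P$ is determined by the sign vector $(\sgn Q_{ijk}(P))_{\{i,j,k\}}$ (since the orientation of a triple under any permutation of $i,j,k$ is determined by the orientation of the sorted triple). Consequently $t_s(n,2)$ equals the number of distinct sign vectors realized over the open set
$$U := \bigl\{z \in \mathbb{R}^{2n} \;\bigm|\; Q_{ijk}(z) \ne 0 \text{ for all } 1 \le i < j < k \le n\bigr\}.$$
By continuity, the sign vector $(\sgn Q_{ijk})$ is constant on each connected component of $U$; hence the number of realized sign patterns, and therefore $t_s(n,2)$, is bounded above by the number of connected components of $U$.

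Finally, I would apply Theorem~\ref{thm:warren} with the parameters $N = 2n$ (number of variables), $d = 2$ (degree bound), and $m = \binom{n}{3}$ (number of polynomials), giving
$$t_s(n,2) \;\le\; 2 \, (2d)^{N} \sum_{k=0}^{N} 2^k \binom{m}{k} \;=\; 2 \cdot 4^{2n} \sum_{k=0}^{2n} 2^k \binom{\binom{n}{3}}{k} \;=\; 2 \cdot 16^{n} \sum_{k=0}^{2n} 2^k \binom{\binom{n}{3}}{k},$$
which is exactly the claimed inequality. The only subtle points are keeping the degree computation correct (noting that even though each $Q_{ijk}$ has only six monomials, its total degree in the $2n$ variables is indeed $2$) and observing that the number of sign patterns is at most, not equal to, the number of components of $U$—which is all Warren's bound requires. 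Neither step looks like a real obstacle; the proposition is essentially a direct application of Warren's theorem to the standard orientation polynomials.
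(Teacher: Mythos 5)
Your proposal is correct and follows essentially the same route as the paper: the same orientation polynomials $Q_{ijk}$ of degree $2$ in $2n$ variables, the same identification of simple labelled order types with $\{-1,1\}$-sign patterns, the same continuity argument showing the sign vector is constant on each component of the non-vanishing set, and the same application of Warren's theorem with $m=\binom{n}{3}$, $N=2n$, $d=2$. No gaps.
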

We remark that the proof method is standard and uses the same idea that is employed by Alon in his paper~\cite{alon}. The improvement of the bound solely relies on using Warren's theorem, which provides improved upper bounds compared to corresponding result used by Alon (namely, Theorem~2.4 in~\cite{alon}). 
\begin{proof}[Proof of Proposition~\ref{prop:poly}]
Given a labelled point set $P=\{p_1,\ldots,p_n\}$ in $\mathbb{R}^2$, its \emph{sign pattern} $\mathbf{sgn}(P) \in \{-1,0,1\}^{\binom{n}{3}}$ is the vector indexed by the ordered triples $\{ijk|1\le i<j<k\le n\}$ (in lexicographic order) and whose entry at position $ijk$ represents the orientation of the point triple $p_ip_jp_k$. 

Note that by definition, two labelled $n$-point sets $P,Q$ in the plane are isomorphic if and only if $\mathbf{sgn}(P)=\mathbf{sgn}(Q)$\footnote{It is okay for us to restrict to triples $ijk$ with $i<j<k$ only, since the signs all other triples can be easily deduced from those by changing the sign for every inversion of two indices.}. Moreover, a point set $P$ is in general position if and only if no three points lie on a line, which is equivalent to saying that all entries of its sign pattern $\mathbf{sgn}(P)$ are non-zero (that is, $-1$ or $1$). Altogether, we may conclude that $t_s(n,2)$ is bounded by the size of the set of possible $\{-1,1\}$-sign patterns that can be generated by choosing $n$ points in the plane:
$$t_s(n,2)\le |\{\mathbf{sgn}(\{p_1,\ldots,p_n\})|p_1,\ldots,p_n \in \mathbb{R}^2\}\cap \{-1,1\}^{\binom{n}{3}}|.$$
To bound the size of this set of $\{-1,1\}$-vectors, let us write out the coordinates of the points $p_1,\ldots,p_n$ as $p_i=(x_i,y_i)$ and note that the entry of $\mathbf{sgn}(\{p_1,\ldots,p_n\})$ at any position $ijk$ can be written as the sign of the determinant
$$\det \begin{pmatrix}
1 & 1 & 1\\
x_i & x_j & x_k\\
y_i & y_j & y_k
\end{pmatrix}.$$ The latter can be viewed as a polynomial $Q_{ijk}(x_1,y_1,x_2,y_2,\ldots,x_n,y_n)$ in the $2n$ variables $x_1,y_1,\ldots,x_2,y_2$ of degree $d=2$. Setting $m:=\binom{n}{3}$, we can thus apply Warren's theorem to the $m$ distinct polynomials $Q_{1,2,3},\ldots,Q_{n-2,n-1,n}$. We thus obtain that the number of topologically connected components of the set 
$$R:=\{(x_1,y_1,\ldots,x_n,y_n) \in \mathbb{R}^{2n}|\forall 1 \le i<j<k\le n: Q_{ijk}(x_1,y_1,\ldots,x_n,y_n)\neq 0\}$$ is upper-bounded by
$$2(2d)^{2n}\sum_{k=0}^{2n}{2^k\binom{m}{k}}=2\cdot 16^n\cdot \sum_{k=0}^{2n}{2^k\binom{\binom{n}{3}}{k}}.$$
It remains to argue why the number of components of this set is an upper-bound on the number of sign-patterns. We claim that the point sets corresponding to the vectors in any fixed component of $R$ all have the same sign-pattern. Since every possible $\{-1,1\}$-sign pattern is clearly generated by at least one of the components of $R$, this will imply the claim of the theorem. 

So, suppose towards a contradiction that there exist vectors $\mathbf{x}=(x_1,y_1,\ldots,x_n,y_n)$ and $ \mathbf{x}'=(x_1',y_1',\ldots,x_n',y_n')$ in the same connected component of $R$, but such that the point sets $P=\{(x_1,y_1),\ldots,(x_n,y_n)\}$ and $P'=\{(x_1',y_1'),\ldots,(x_n',y_n')\}$ satisfy $\mathbf{sgn}(P)\neq \mathbf{sgn}(P')$. This means that there exists at least one ordered triple $ijk$ with $1 \le i<j<k\le n$ such that $$\text{sgn}(Q_{ijk}(x_1,y_1,\ldots,x_n,y_n))\neq \text{sgn}(Q_{ijk}(x_1',y_1',\ldots,x_n',y_n')).$$ Since $Q_{ijk}$ is a polynomial and thus a continuous function, the intermediate value theorem implies that when following any topological path in $\mathbb{R}^{2n}$ from $\mathbf{x}$ to $\mathbf{x}'$, $Q_{ijk}$ will at least once attain the value $0$ along this path. However, any zero of $Q_{ijk}$ is by definition not an element of $R$. Hence, there is no path fully contained in $R$ that connects $\mathbf{x}$ and $\mathbf{x}'$, contradicting that by assumption they belong to the same component of $R$. This shows that our above assumption was wrong. Indeed, $n$-point-sets corresponding to the vectors in a fixed component of $R$ all have the same sign pattern. This concludes the proof of the theorem.
\end{proof}
We can now give the proof of Theorem~\ref{thm:main2}.
\begin{proof}[Proof of Theorem~\ref{thm:main2}]
Let $n \in \{107,\ldots,193\}$ be given. Using Theorem~\ref{thm:aux} and Proposition~\ref{prop:poly}, we obtain that
$$\sigma(n)\le \left\lfloor \frac{\log_2(t_s(n,2))-(n-4)-\log_2((n-3)!)}{n-\log_2(16n(n-1)(n-2))}\right\rfloor +2$$
$$\le \left\lfloor \frac{\log_2\left(2\cdot 16^n\cdot \sum_{k=0}^{2n}{2^k\binom{\binom{n}{3}}{k}})\right)-(n-4)-\log_2((n-3)!)}{n-\log_2(16n(n-1)(n-2))}\right\rfloor +2.$$ Using a simple python script, it can be verified that the latter expression equals $30$ for every $n \in \{107,\ldots,193\}$ (and in fact, is bigger than $30$ for any $n$ below or above this interval). This concludes the proof. 
\end{proof}

\section{Explicit construction}\label{sect:explicit}
In this section, we provide an explicit construction of a conflict collection consisting of $n^{\underline{6}}+1$ planar $n$-vertex graphs for every $n \ge 5040$, thereby proving Theorem~\ref{thm:main3}. Let $n \ge 5040$ be given. Let us start by considering the set $$\mathcal{S}_n:=\left\{(n_1,n_2,n_3,n_4,n_5,n_6,n_7,n_8)\in \mathbb{N}\cup \{0\}\bigg\vert\sum_{i=1}^{8}{n_i}=n-6\right\}$$ of ordered partitions of the number $n-6$ into $8$ non-negative integers. We then have $$|\mathcal{S}_n|=\binom{(n-6)+8-1}{8-1}=\binom{n+1}{7}=\frac{(n+1)^{\underline{7}}}{7!}=\frac{(n+1)n^{\underline{6}}}{7!}>n^{\underline{6}},$$ where we used that $n \ge 5040=7!$ in the last line. It is therefore possible to select (explicitly) a subset $\mathcal{S}_n'\subset \mathcal{S}_n$ such that $|\mathcal{S}_n'|=n^{\underline{6}}+1$. We now define a collection $\mathcal{G}_n$ of planar triangulations on $n$ vertices as follows:

Let $H$ denote the octahedron graph, which is a triangulation with $6$ vertices $a,b,c,d,e,f$ and $8$ faces $f_1,\ldots,f_8$ (see Figure~\ref{fig:octa}). 

Now, for every element $(n_1,n_2,n_3,n_4,n_5,n_6,n_7,n_8) \in \mathcal{S}_n'$, construct a planar triangulation by (1) adding for every one of the $8$ faces $f_i$ of $H$ a set of exactly $n_i$ additional vertices that are placed into the corresponding face, and (2) triangulating each of the faces $f_i$ together with the $n_i$ new vertices placed in it. 

Clearly, there may be many different ways to triangulate, but for the sake of this construction, we just choose and fix for every given $(n_1,n_2,n_3,n_4,n_5,n_6,n_7,n_8) \in \mathcal{S}_n'$ one possible way to do this, and call the resulting triangulation $T_n(n_1,n_2,n_3,n_4,n_5,n_6,n_7,n_8)$.

Finally, we define $\mathcal{G}_n=\{T_n(n_1,n_2,n_3,n_4,n_5,n_6,n_7,n_8)|(n_1,n_2,n_3,n_4,n_5,n_6,n_7,n_8)\in \mathcal{S}_n'\}.$

Having defined (explicitly) the family $\mathcal{G}_n$ of planar $n$-vertex graphs, we can now move on to the proof of Theorem~\ref{thm:main3}.

\begin{figure}[h]
    \centering
    \includegraphics[scale=0.65]{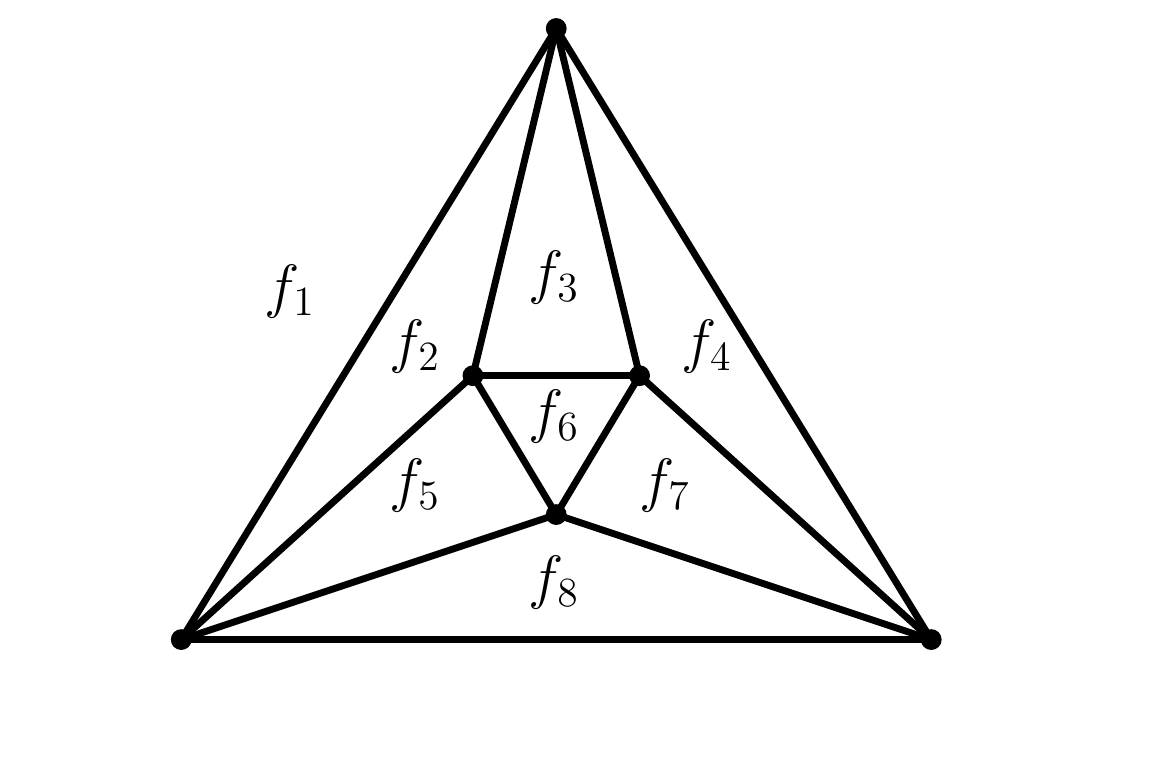}     \includegraphics[scale=0.65]{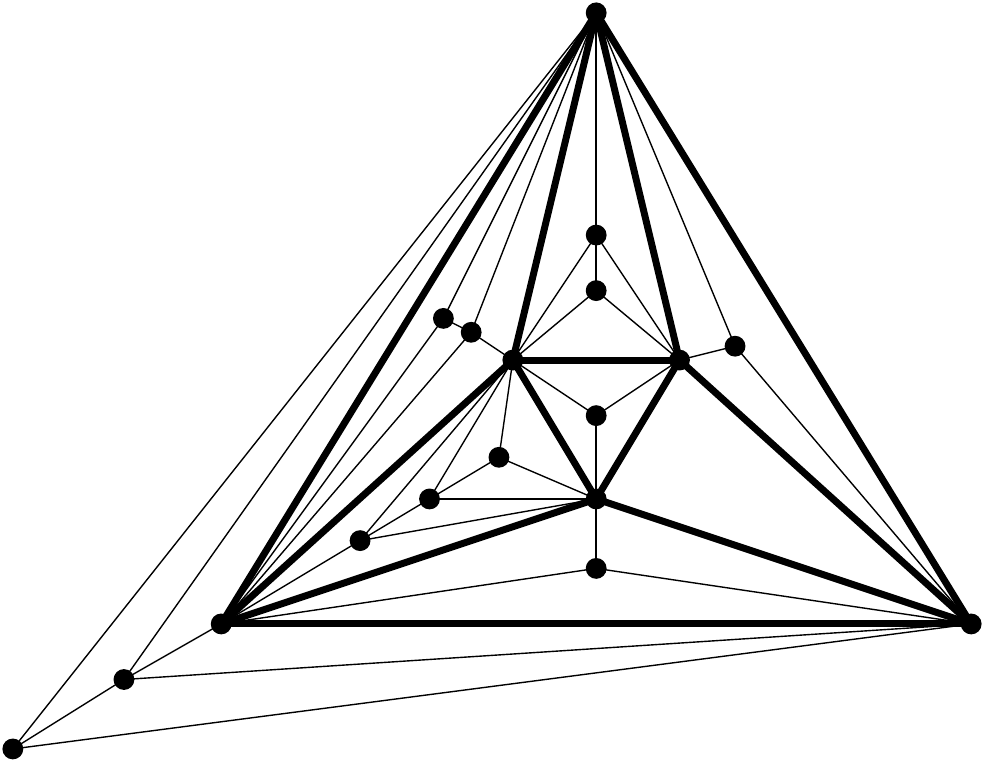}
    \caption{Left: The octahedron with its 8 different faces $f_1,\ldots,f_8$. Right: A possible choice for $T_{18}(2,2,2,1,3,1,0,1)$. The subgraph $H$ is marked fat.}
    \label{fig:octa}
\end{figure}

\begin{proof}[Proof of Theorem~\ref{thm:main3}]
Let $n \ge 5040$ be given and $\mathcal{G}_n$ be defined as above. We claim that $\mathcal{G}_n$ is a conflict collection, i.e., there is no simultaneous embedding of $\mathcal{G}_n$. Towards a contradiction, suppose that there is a set $P$ of $n$ points in the plane such that $\mathcal{G}_n$ is simultaneously embeddable on $P$. For every $s \in \mathcal{S}_n'$ let $\pi_s:T_n(s)\rightarrow P$ denote a straight-line embedding of $T_n(s)$ on $P$. Then for each $s \in \mathcal{S}_n'$, the restriction $\pi_s|_{\{a,b,c,d,e,f\}}$ to the vertex-set of $H$ forms an injective mapping from $\{a,b,c,d,e,f\}$ into $P$. Since the number of such mappings equals $n^{\underline{6}}$ and since $|\mathcal{S}_n'|=n^{\underline{6}}+1$, the pigeon-hole principle implies that there are distinct $s,s' \in \mathcal{S}_n'$ such that $\pi_{s}(x)=\pi_{s'}(x)$ for every $x\in \{a,b,c,d,e,f\}$. This means in particular that every face $f_i$ of $H$ gets embedded as a triangle on $P$ in the same way by $\pi_{s}$ and $\pi_{s'}$. 

Write $s=(n_1,\ldots,n_8)$ and $s'=(n_1',\ldots,n_8')$. Since $s \neq s'$, and $\sum_{i=1}^{8}{n_i}=n-6=\sum_{i=1}^{8}{n_i'}$, there have to be at least two indices $i \in \{1,\ldots,8\}$ such that $n_i\neq n_i'$. We can therefore choose $i \in \{1,\ldots,8\}$ such that $n_i \neq n_i'$ and $f_i$ is not the outer face in the embedding of $H$ induced by $\pi_s|_{\{a,b,c,d,e,f\}}=\pi_{s'}|_{\{a,b,c,d,e,f\}}$.

Now note that since $T_n(s)$, $T_n(s')$ are planar triangulations (and thus $3$-connected planar graphs), Whitney's theorem implies that they have combinatorially unique embeddings into the plane, apart from the choice of the outer face. This implies in particular that in the embeddings $\pi_{s}$ and $\pi_{s'}$ of $T(s)$ and $T(s')$, there are exactly $n_i$ vertices of $T_n(s)$ inside the triangle spanned by $f_i$, but on the other hand also exactly $n_i'$ vertices of $T_n(s')$ inside the very same triangle. Since every point of $P$ must be used by both embeddings, it follows that $n_i=n_i'$, a contradiction. This shows that our initial assumption on the simultaneous embeddability of $\mathcal{G}_n$ was wrong, and concludes the proof of Theorem~\ref{thm:main3}.
\end{proof}

\paragraph{\textbf{Declaration.}} The authors declare that no data has been used for the research presented in this article.

\end{document}